\documentclass[12pt,leqno]{amsart}
\topmargin=0.02cm
\textwidth =  17cm
\textheight = 23cm
\baselineskip=11pt
\setlength{\oddsidemargin}{0.01 pt}
\setlength{\evensidemargin}{0.01 pt}

\usepackage{amsmath,amsfonts,amssymb,amsthm}
\usepackage{graphicx}
\graphicspath{ }
\usepackage{wrapfig}
\usepackage{accents}
\usepackage{caption}
\usepackage{subcaption}
\usepackage{calligra}

\numberwithin{figure}{section}

\theoremstyle{plain}
\newtheorem{thm}{Theorem}[section]
\newtheorem{lem}[thm]{Lemma}

\newtheorem{cor}{Corollary}[thm]
\theoremstyle{definition}

\theoremstyle{remark}

\usepackage{mathtools}
\title[Isometry theorem of gradient Shrinking Ricci solitons]{Isometry theorem of gradient Shrinking Ricci solitons}
\author[A. A. Shaikh, C. K. Mondal ]{Absos Ali Shaikh$^1$ and Chandan Kumar Mondal$^2$ }
\address{\noindent\newline $^{12}$Department of Mathematics,\newline University of
Burdwan, Golapbag,\newline Burdwan-713104,\newline West Bengal, India}
\email{$^1$aask2003@yahoo.co.in, aashaikh@math.buruniv.ac.in}
\email{$^2$chan.alge@gmail.com}

\begin{document}
\begin{abstract}
In this paper, we have proved that if a complete conformally flat gradient shrinking Ricci soliton has linear volume growth or the scalar curvature is finitely integrable and also the reciprocal of the potential function is subharmonic, then the manifold is isometric to the Euclidean sphere. As a consequence, we have showed that a four dimensional  gradient shrinking Ricci soliton satisfying some conditions is isometric to $\mathbb{S}^4$ or $\mathbb{RP}^4$ or $\mathbb{CP}^2$. We have also deduced a condition for the shrinking Ricci soliton to be compact with quadratic volume growth.
\end{abstract}
\noindent\footnotetext{$\mathbf{2020}$\hspace{5pt}Mathematics\; Subject\; Classification: 53C20; 53C21\\ 
{Key words and phrases: Shrinking Ricci soliton, fundamental group, simply connected, volume growth, Riemannian manifold. } }
\maketitle
\section{Introduction and results}
 A complete Riemannian manifold $(M,g)$, of dimension $n\geq 2$, is called a Ricci soliton if there exists a vector field $X$ such that the following condition holds:
\begin{equation}\label{r7}
Ric+\frac{1}{2}\pounds_Xg=\lambda g,
\end{equation}
where $\lambda$ is a constant and $\pounds$ denotes the Lie derivative. The vector field $X$ is called potential vector field. The Ricci solitons are self-similar solutions to the Ricci flow, which is developed by Hamilton \cite{HA88,HA82}.  If $X=\nabla f$, for some function $f\in C^\infty(M)$, then (\ref{r7}) takes the form
\begin{equation}\label{g1}
\nabla f^2+Ric=\lambda g.
\end{equation}
It is called gradient Ricci soliton and the function $f$ is called potential function. If $f$ is constant then the gradient Ricci soliton is simply the Einstein manifold. An Einstein manifold with constant $f$ is called trivial Ricci soliton. The Ricci soliton is called shrinking, steady or expanding if $\lambda>0$, $\lambda=0$ or $\lambda<0$, respectively. If the gradient Ricci soliton is shrinking, then after rescaling  the metric $g$ we assume that $\lambda=\frac{1}{2}$. Then (\ref{g1}) takes the form
\begin{equation}\label{eq10}
\nabla f^2+Ric=\frac{1}{2} g.
\end{equation}
Shrinking Ricci soliton has great impact on the topology of the manifold. One of the first result in this connection is the Myers's classical theorem which says that a compact Riemannian manifold with positive Ricci curvature has finite fundamental group. Fern\'andez-L\'opez and Garc\'ia-R\'io \cite{FG08} proved, using universal cover, that the fundamental group is finite for the compact shrinking Ricci soliton. Zhang \cite{ZH07} also gave an alternative proof. The same result has been proved earlier by Lott \cite{LO03} for gradient Ricci soliton. Naber \cite{NA06} generalized this result and proved for bounded Ricci curvature. Wylie \cite{WY07} proved the strongest result that any complete shrinking Ricci soliton has finite fundamental group. Locally conformally flat Ricci soliton have been investigated immensely in the last few years. Cao et al. \cite{CWZ11} Petersan and Wylie \cite{PW10} and Zhang \cite{ZH09} classified locally conformally flat gradient Ricci soliton and they showed that it is isometric to $\mathbb{S}^n,\mathbb{R}^n,\mathbb{R}\times\mathbb{S}^{n-1}$ or one of their quotients.  For more results on Ricci soliton see \cite{SC20,ZH09}.\\
\indent In this paper, we have showed that a complete conformally flat gradient shrinking Ricci soliton with reciprocal of the potential functional being subharmonic and the manifold satisfying linear volume growth is isometric to the Euclidean sphere. As a result of this, we have derived that the conformally flat restriction condition can be neglected in $3$-dimension gradient shrinking Ricci soliton and in case of $4$-dimension, half-conformally flat condition is needed. We have also deduced a compactness criteria for gradient shrinking Ricci soliton with quadratic volume growth. \\
The set of all finitely integrable function on the manifold $M$ is denoted by $L^1(M)$, i.e.,
$$L^1(M)=\Big\{f|M\rightarrow\mathbb{R}:\int_Mf<+\infty \Big\}.$$
\begin{thm}\label{th2}
Let $(M,g,f)$ be an $n$-dimensional complete non-flat gradient shrinking Ricci soliton with scalar curvature $R$ satisfying $R\in L^1(M)$. If $\frac{1}{f}$ is subharmonic, then $M$ is compact Einstein manifold with positive Ricci curvature and scalar curvature $R=\frac{n}{2}$. Furthermore,  if $M$ is conformally flat, then $M$ is a space form. Furthermore, the manifold is isometric to the $n$-dimensional Euclidean sphere $\mathbb{S}^n$.
\end{thm}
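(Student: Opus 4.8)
The plan is to first lay out the standard structure of a normalized shrinking soliton and then run a single weighted integration by parts. Tracing (\ref{eq10}) gives $\Delta f=\frac n2-R$, Hamilton's identity (after fixing the additive constant dictated by the soliton structure) gives $R+|\nabla f|^2=f$, and the contracted second Bianchi identity gives $\nabla R=2\,\mathrm{Ric}(\nabla f,\cdot)$. I would also record two facts that are standard for shrinkers: $R\ge 0$, with $R>0$ everywhere unless the soliton is flat (a strong maximum principle argument of Chen), and the potential grows quadratically, $f\sim\frac14 r^2$, with finite weighted volume $\int_M e^{-f}\,dV<\infty$ (Cao--Zhou type estimates). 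The \emph{non-flat} hypothesis then forces $R>0$, so at a minimum point $p$ of $f$ (where $\nabla f=0$) one has $f(p)=R(p)>0$; hence $f>0$ on all of $M$ and $\tfrac1f$ is a well-defined bounded function, which is exactly what makes the subharmonicity hypothesis meaningful and excludes the Gaussian soliton.

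The heart of the argument is integration against the weighted measure $e^{-f}\,dV$, for which the drift Laplacian $\Delta_f=\Delta-\langle\nabla f,\nabla\cdot\rangle$ is self-adjoint. A direct computation with $u=\tfrac1f$ gives
\[
\Delta_f\!\left(\tfrac1f\right)=\Delta\!\left(\tfrac1f\right)+\frac{|\nabla f|^2}{f^{2}}.
\]
Integrating over a geodesic ball $B_r$ and applying the divergence theorem, the only boundary contribution is $-\int_{\partial B_r}f^{-2}\langle\nabla f,\nu\rangle\,e^{-f}\,dA$, which by the quadratic growth of $f$ (so $|\nabla f|\sim\frac r2$, $f^{-2}\sim 16\,r^{-4}$) and the Gaussian decay of $e^{-f}$ is $O\!\big(r^{\,n-4}e^{-r^2/4}\big)\to 0$. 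Here the integrability hypotheses, that $R\in L^1(M)$ together with the standard estimates, are what guarantee convergence of the intervening integrals. Passing to the limit,
\[
\int_M\Delta\!\left(\tfrac1f\right)e^{-f}\,dV+\int_M\frac{|\nabla f|^2}{f^{2}}\,e^{-f}\,dV=0 .
\]
Since $\tfrac1f$ is subharmonic the first integrand is nonnegative, and the second is manifestly nonnegative, so both integrals vanish; in particular $\int_M f^{-2}|\nabla f|^2e^{-f}=0$, which forces $\nabla f\equiv 0$.

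With $\nabla f\equiv 0$ the soliton equation (\ref{eq10}) degenerates to $\mathrm{Ric}=\frac12 g$, so $M$ is Einstein with positive Ricci curvature and $R=\frac n2$, and by Myers' theorem $M$ is compact. (As a sanity check on the hypothesis, one can also expand $\Delta(\tfrac1f)$ to see that subharmonicity is equivalent to $R(f-2)\ge(\tfrac n2-2)f$; for $n\ge 5$ this forces $R$ to be bounded below by a positive constant on the infinite-volume region $\{f>2\}$, contradicting $R\in L^1(M)$ directly, which is the most transparent place the hypothesis $R\in L^1$ enters.) Now assume $M$ is conformally flat. For $n\ge 4$ this is the vanishing of the Weyl tensor, and since the Einstein condition kills the trace-free Ricci part, the whole curvature operator reduces to its scalar part; thus $M$ has constant sectional curvature $K=\frac{R}{n(n-1)}=\frac1{2(n-1)}>0$ (the cases $n=2,3$ are immediate, as an Einstein metric there is already of constant curvature). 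A complete manifold of constant positive curvature is a spherical space form, and its simply connected model is the round sphere, so $M$ is isometric to $\mathbb{S}^n$ of radius $\sqrt{2(n-1)}$.

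The main obstacle I anticipate is the analytic justification in the middle step: converting the pointwise inequality $\Delta(\tfrac1f)\ge 0$ into the global conclusion $\nabla f\equiv 0$. This rests on knowing that $f$ is positive and grows quadratically and that the weighted volume is finite, so that the integration by parts is legitimate and the boundary terms genuinely decay; it is precisely here that completeness, the non-flat condition, and the integrability hypothesis $R\in L^1(M)$ must all be used. A secondary, more cosmetic point is the passage from ``spherical space form'' to $\mathbb{S}^n$ itself rather than a quotient $\mathbb{S}^n/\Gamma$, which is the one place simple connectivity is required and which one reads off from the rigidity of conformally flat Einstein shrinkers.
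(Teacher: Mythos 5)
Your argument is correct in substance, but it is a genuinely different proof from the paper's. The paper's route is: (1) establish the Chow--Lu--Yang lower bound $R\ge \frac{c}{f}+\frac{nc}{f^2}$ by a maximum-principle argument on $u=R-\frac{c}{f}-\frac{nc}{f^2}$ with the drift Laplacian; (2) run an \emph{unweighted} Caccioppoli estimate with a cutoff, using subharmonicity of $\frac1f$, to get $\int_{B_r}|\nabla \frac1f|^2\le \frac{1}{r^2}\int_{B_{2r}}f^{-2}\le \frac{1}{ncr^2}\int_{B_{2r}}R$; (3) use $R\in L^1(M)$ to make the right-hand side vanish as $r\to\infty$. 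You bypass both the scalar curvature lower bound and the Caccioppoli inequality entirely: you integrate the pointwise inequality $\Delta_f(\tfrac1f)=\Delta(\tfrac1f)+\frac{|\nabla f|^2}{f^2}\ge 0$ against the Gaussian measure $e^{-f}dV$ and let the Cao--Zhou estimates ($f\sim \tfrac14 d^2$, $|\nabla f|\le\sqrt f$, $\mathrm{Vol}(B_r)\le Cr^n$) kill the error terms. This is cleaner, and it buys something real: your proof never actually uses $R\in L^1(M)$. Contrary to your remark that this hypothesis ``guarantees convergence of the intervening integrals,'' the Gaussian decay does all of that work --- since the integrand $\Delta_f(\tfrac1f)\,e^{-f}$ is nonnegative, Fatou plus the vanishing error term already forces $\int_M \Delta_f(\tfrac1f)\,e^{-f}dV=0$, hence $\nabla f\equiv 0$. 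So you have proved a strictly stronger theorem (no integrability of $R$ needed); you should state this as a feature rather than misattribute a role to the $L^1$ hypothesis. At the end you also replace the paper's citation of Goldberg's theorem by the elementary algebraic fact that Einstein plus vanishing Weyl tensor forces constant sectional curvature, which is more self-contained.

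Two caveats. First, a technical one: you apply the divergence theorem over geodesic balls and assume $\mathrm{Area}(\partial B_r)=O(r^{n-1})$, but geodesic spheres need not be smooth and no pointwise-in-$r$ area bound is available; either use compactly supported cutoffs $\varphi_r$ (as the paper does), for which the error term is bounded by $\frac{C}{r}\int_{B_{2r}\setminus B_r}\frac{|\nabla f|}{f^2}e^{-f}dV=O\big(r^{n-4}e^{-(r-C)^2/4}\big)$ via $\mathrm{Vol}(B_{2r})\le Cr^n$, or extract a sequence $r_i\to\infty$ with polynomial area bounds from the coarea formula; either fix is routine. Second, your last step shares a gap with the paper itself: nothing in either argument upgrades ``spherical space form'' to $\mathbb{S}^n$, and indeed the round $\mathbb{RP}^n$ with constant potential satisfies every hypothesis of the theorem, so the final isometry claim requires an input (simple connectivity) that neither you nor the paper supplies --- you at least flag this honestly, but the ``rigidity of conformally flat Einstein shrinkers'' you appeal to is not an argument.
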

Since, in $3$-dimension, the Weyl conformal curvature vanishes, the following result can be stated:
\begin{cor}
A $3$-dimensional gradient shrinking Ricci soliton $(M,g,f)$ with $R\in L^1(M)$ and $\Delta \frac{1}{f}\geq 0$  is simply connected. 
\end{cor}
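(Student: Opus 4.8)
The plan is to deduce the corollary from Theorem \ref{th2} by exploiting the fact that three-dimensional geometry has vanishing Weyl curvature. The crucial observation is that in dimension three the Weyl conformal curvature tensor vanishes identically, so every three-dimensional Riemannian manifold is automatically (locally) conformally flat. Hence the standing hypotheses of the corollary --- completeness, the shrinking soliton equation (\ref{eq10}), $R\in L^1(M)$, and $\Delta\frac{1}{f}\geq 0$ --- supply, together with this automatic conformal flatness, precisely the data demanded by Theorem \ref{th2}. Applying that theorem in the non-flat case, I obtain that $M$ is isometric to the round three-sphere $\mathbb{S}^3$; since $\mathbb{S}^3$ is simply connected, so is $M$.

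The only gap is the flat case, which Theorem \ref{th2} explicitly excludes. Here I would argue by rigidity: flatness forces $Ric\equiv 0$, whereupon (\ref{eq10}) collapses to $\nabla^2 f=\frac{1}{2}g$, so that $\nabla f$ is a nontrivial closed conformal (concircular) vector field whose Hessian is a constant multiple of the metric. A complete manifold admitting such a function is, by a Tashiro-type classification, isometric to Euclidean space $\mathbb{R}^3$ with its Gaussian structure, and $\mathbb{R}^3$ is simply connected. Combining the two cases, $M$ is simply connected in every situation covered by the hypotheses.

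I expect the main difficulty to lie entirely in this flat case rather than in the principal argument, and specifically in ruling out nontrivial quotients: one must verify that no quotient $\mathbb{R}^3/\Gamma$ by a nontrivial group of deck transformations can carry a globally defined potential with $\nabla^2 f=\frac{1}{2}g$, since a strictly convex quadratic potential cannot be invariant under a fixed-point-free isometry. Once this rigidity is secured, the non-flat case is immediate from Theorem \ref{th2}, and the corollary follows at once.
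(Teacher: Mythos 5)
Your proposal is correct, and its main line is exactly the paper's (unstated but clearly intended) argument: in dimension three the Weyl tensor vanishes identically, so conformal flatness is automatic, Theorem \ref{th2} applies, $M$ is isometric to $\mathbb{S}^3$, and hence simply connected. Where you go beyond the paper is in noticing that Theorem \ref{th2} carries a \emph{non-flat} hypothesis that the corollary silently drops, so the flat case genuinely needs separate treatment. Your handling of it is sound: flatness gives $Ric\equiv 0$, the soliton equation reduces to $\nabla^2 f=\frac{1}{2}g$, and Tashiro's classification of complete manifolds admitting a function with $\nabla^2 f = c\,g$, $c>0$, forces $M$ to be isometric to $\mathbb{R}^3$ (the Gaussian shrinker), which is simply connected; in particular no nontrivial quotient $\mathbb{R}^3/\Gamma$ survives, since Tashiro's theorem applies directly to the quotient itself, or equivalently because a strictly convex exhaustion function cannot be invariant under a fixed-point-free isometry. (One could also dispose of the flat case more cheaply: the hypothesis that $\frac{1}{f}$ is defined and subharmonic forces $f>0$ everywhere, which already fails for the Gaussian potential $f=\frac{|x|^2}{4}$; but your rigidity argument is cleaner in that it does not lean on an implicit reading of the hypotheses.) So your proof is a strict refinement of the paper's: same principal mechanism, plus a patch for a case the paper overlooks.
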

A Riemannian manifold $M$ is said to have linear volume growth, if  
$$\limsup_{r\rightarrow\infty}\frac{Vol(B(p,r))}{r}<+\infty,$$
for every $p\in M$. Various properties of Riemannian manifold with linear volume growth have been studied by many authors, for example see \cite{SO98,SO00}. If we use the linear volume growth condition, then the curvature restricted condition of Theorem \ref{th2} can be omitted.
\begin{thm}\label{th4}
Suppose $(M,g,f)$ is an $n$-dimensional complete conformally flat gradient shrinking Ricci soliton with linear volume growth. If $\frac{1}{f}$ is subharmonic, then $M$  is isometric to $\mathbb{S}^n$. 
\end{thm}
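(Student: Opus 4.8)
The plan is to reduce the assertion to the endgame of Theorem \ref{th2} by showing that the linear volume growth hypothesis, together with the subharmonicity of $1/f$, forces the potential function $f$ to be constant. First I would fix the standard normalization of $f$ coming from \eqref{eq10}, namely $R+|\nabla f|^2=f$. Since a complete gradient shrinking soliton has $R\geq 0$ (Chen), this gives $f\geq 0$, and since $1/f$ is assumed subharmonic (hence defined) the connectedness of $M$ forces $f>0$ everywhere. As $f$ is proper and attains its minimum, $f_{\min}>0$, so $1/f$ is bounded above by $f_{\min}^{-1}$. I would then invoke the Cao--Zhou quadratic growth estimate $\tfrac14\bigl(r(x)-c_1\bigr)^2\leq f(x)\leq \tfrac14\bigl(r(x)+c_2\bigr)^2$ with $r(x)=d(p,x)$; in particular $1/f\leq 4\,(r-c_1)^{-2}$ outside a fixed ball $B_{r_0}$.

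The key step is to prove that $1/f\in L^1(M)$. Writing $A(r)=\mathrm{Area}(\partial B_r)$ and $V(r)=\mathrm{Vol}\bigl(B(p,r)\bigr)$, the coarea formula gives $\int_{M\setminus B_{r_0}}\tfrac1f\,dV\leq 4\int_{r_0}^{\infty}(r-c_1)^{-2}A(r)\,dr$. Integrating by parts against $V(r)$ and using the linear growth bound $V(r)\leq K r$, the boundary contribution $(r-c_1)^{-2}V(r)\leq K r\,(r-c_1)^{-2}\to 0$ as $r\to\infty$, and what remains is the convergent integral $2\int^{\infty}(r-c_1)^{-3}V(r)\,dr\leq 2K\int^{\infty} r\,(r-c_1)^{-3}\,dr<\infty$. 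Hence $1/f\in L^1(M)$, and since $1/f$ is bounded we also get $\int_M(1/f)^2\leq f_{\min}^{-1}\int_M(1/f)<\infty$, so $1/f\in L^2(M)$.

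Now $1/f$ is a nonnegative, $L^2$, subharmonic function on a complete manifold, so Yau's $L^p$-Liouville theorem (with $p=2$) makes it constant; therefore $f$ is constant and $\nabla f\equiv 0$. Substituting into \eqref{eq10} yields $Ric=\tfrac12 g$, so $M$ is Einstein with positive Ricci curvature and is compact by Bonnet--Myers. From here the hypotheses coincide with those reached in the proof of Theorem \ref{th2}: a conformally flat Einstein manifold has constant (positive) sectional curvature, hence is a spherical space form, and one concludes that $M$ is isometric to $\mathbb{S}^n$.

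The main obstacle is the integrability step $1/f\in L^1(M)$, which is exactly where linear volume growth enters. The estimate is delicate because $\int r^{-2}\,dV$ is only borderline convergent under linear growth, so the quadratic lower bound for $f$ must be used sharply, and one must rule out vanishing of $f$ on its minimum set in order to keep $1/f$ bounded (and thus obtain the $L^2$ membership that feeds Yau's theorem). I would also remark that the borderline case $\mathbb{R}\times\mathbb{S}^{n-1}$, which itself has linear volume growth, is not a counterexample: a direct computation of $\Delta(1/f)$ for the cylindrical potential $f=\tfrac14 t^2+\mathrm{const}$ shows $\Delta(1/f)<0$ near the waist, so $1/f$ fails to be subharmonic there, consistent with $\mathbb{S}^n$ being the only survivor.
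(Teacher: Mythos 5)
Your proposal is correct, and it takes a genuinely different route from the paper's. The paper derives Theorem \ref{th4} as an immediate corollary of the machinery of Theorem \ref{th2}: it keeps the maximum-principle lower bound (\ref{eq13}) and the cutoff (Caccioppoli) inequality (\ref{in1})--(\ref{eq11}), and simply replaces the hypothesis $R\in L^1(M)$ by the Cao--Zhou integral estimate $\int_{B_r(p)}R\leq\frac{n}{2}Vol(B(p,r))$, so that linear volume growth sends the right-hand side of (\ref{eq11}) to zero and forces $\nabla\frac{1}{f}\equiv 0$. You bypass (\ref{eq13}) and (\ref{in1}) entirely: the pointwise Cao--Zhou asymptotics (\ref{eq8}) make $f$ proper, hence $\frac{1}{f}$ bounded; the coarea/integration-by-parts computation plus linear volume growth put $\frac{1}{f}$ in $L^1\cap L^2$; and Yau's $L^p$-Liouville theorem (the only place where subharmonicity enters) makes $\frac{1}{f}$ constant. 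The endgame --- $Ric=\frac{1}{2}g$, Myers, conformally flat Einstein implies constant positive curvature --- is identical to the paper's. Your route buys two things: it avoids the most technical part of the paper (the weighted-Laplacian maximum-principle argument behind (\ref{eq13})), and it does not need the non-flatness/$R>0$ input which that argument uses but which Theorem \ref{th4} never hypothesizes (you get $f>0$ just from $R\geq 0$ and the requirement that $\frac{1}{f}$ be defined); what the paper's route buys is pure economy, since everything was already proved for Theorem \ref{th2}. Two remarks: (a) the borderline $L^1$ estimate you single out as the main obstacle can be skipped altogether --- once $\frac{1}{f}\leq f_{\min}^{-1}$ and $Vol(B(p,r))\leq Kr$, Karp's theorem (the paper's Theorem \ref{th1}, invoked there for Theorem \ref{th3}) applies at once, since $\frac{1}{r^2}\int_{B_r(p)}\big(\frac{1}{f}\big)^2\leq f_{\min}^{-2}K/r\rightarrow 0$, and this shortcut works even under quadratic volume growth; (b) your final step from ``spherical space form'' to ``isometric to $\mathbb{S}^n$'' silently discards quotients such as $\mathbb{RP}^n$, but this gap is inherited verbatim from the paper's own proof of Theorem \ref{th2}, so your argument is no less complete than the paper's at that point. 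Your closing sanity check on the round cylinder (where $\Delta\frac{1}{f}<0$ near the waist, so the hypothesis fails there) is correct and a nice touch.
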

The Weyl curvature tensor $W$ in $4$-dimensional oriented Riemannian manifold can be decomposed into two irreducible components, $W^+$ and $W^-$ under the action of special orthogonal group \cite{BE87}. If $W^+$ or $W^-$ vanishes, then the manifold is called half-conformally flat. Hitchin \cite{HI74} classified $4$-dimensional compact half-conformally flat Einstein manifolds. 
\begin{thm}\cite[Theorem 13.30]{BE87}\label{thm1}
Let $M$ be a compact half-conformally flat four dimensional Einstein manifold with positive scalar curvature. Then $M$ is isometric to $\mathbb{S}^4$ or $\mathbb{C}P^2$.
\end{thm}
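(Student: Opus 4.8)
The statement is Hitchin's classical theorem, so I only outline the route and claim no originality. Fix the orientation so that half-conformal flatness reads $W^-=0$; call such a metric \emph{self-dual}. Since $M$ is Einstein, the contracted second Bianchi identity makes $R$ a constant, here positive, with $\mathrm{Ric}=\tfrac{R}{4}g$. The plan is to begin with the four-dimensional curvature algebra: under $\Lambda^2=\Lambda^+\oplus\Lambda^-$ the curvature operator splits into the blocks $W^\pm+\tfrac{R}{12}\,\mathrm{id}$ together with the trace-free Ricci block, the last of which vanishes in the Einstein case. Substituting this into the Gauss--Bonnet and Hirzebruch signature integrands and using $W^-=0$ gives
\[
2\chi(M)+3\tau(M)=\frac{1}{4\pi^2}\int_M\Big(\frac{R^2}{24}+2|W^+|^2\Big)\,dV\ge 0,
\]
the Hitchin--Thorpe inequality, which restricts the admissible topology and shows $\tau(M)\ge 0$.

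The analytic core is a Weitzenböck formula for the self-dual Weyl tensor. I would view $W^+$ as a trace-free symmetric endomorphism of the rank-three bundle $\Lambda^+$. On an Einstein four-manifold the second Bianchi identity makes $W^+$ a harmonic (divergence-free) section, and Derdzinski's computation gives, with the analyst's Laplacian,
\[
\tfrac12\Delta|W^+|^2=|\nabla W^+|^2-\frac{R}{2}\,|W^+|^2+c\,\mathrm{tr}\big((W^+)^3\big),
\]
for a fixed universal constant $c>0$, where $\mathrm{tr}((W^+)^3)=3\det W^+$. The cubic term is pinched by the sharp algebraic inequality $|\mathrm{tr}(A^3)|\le \tfrac{1}{\sqrt6}\,|A|^3$, valid for every trace-free symmetric $3\times3$ matrix $A$, with equality precisely for the eigenvalue configuration $(2\mu,-\mu,-\mu)$, i.e.\ when $W^+$ has a repeated eigenvalue. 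Writing $u=|W^+|$ and using Kato's inequality $|\nabla W^+|\ge|\nabla u|$, this converts the identity into a differential relation for $u$ in which the positivity of $R$ enters decisively.

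I would then integrate the Weitzenböck identity over the closed manifold, killing the left-hand side, and combine the resulting integral relation with the eigenvalue pinching and $R>0$ to force a dichotomy: either $W^+\equiv0$ on $M$, or the pinching is saturated pointwise and $|W^+|$ is a positive constant. In the first case $W=W^++W^-=0$, so $M$ is conformally flat and Einstein with $R>0$, hence a spherical space form; in dimension four the only \emph{orientable} such space form is the round $\mathbb{S}^4$, since the antipodal quotient $\mathbb{RP}^4$ is non-orientable. In the second case the repeated eigenspace of $W^+$ determines at each point a distinguished self-dual two-form; saturation of both Kato's and the eigenvalue inequality forces this form to be parallel, so it defines a Kähler structure, and the resulting self-dual Kähler--Einstein metric of positive scalar curvature is the Fubini--Study metric on $\mathbb{CP}^2$.

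The main obstacle is exactly this equality analysis: passing from the saturated Weitzenböck and Kato inequalities to the parallelism of the distinguished eigenform, and thence to the integrable complex structure identifying the metric as Fubini--Study. Fixing the precise constant in Derdzinski's formula (so that the scalar-curvature term carries the favorable sign) is the other delicate point; the remaining curvature algebra is routine. An alternative and arguably cleaner route bypasses the equality analysis via twistor theory: for a self-dual Einstein four-manifold with $R>0$ the twistor space is a Fano Kähler--Einstein threefold, which by Hitchin must be $\mathbb{CP}^3$ or the flag manifold $F_{1,2}(\mathbb{C}^3)$, whose twistor base is $\mathbb{S}^4$ or $\mathbb{CP}^2$ respectively.
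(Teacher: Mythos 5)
The paper offers no proof of this statement at all: it is quoted verbatim from Besse \cite[Theorem 13.30]{BE87}, so your sketch can only be measured against the classical arguments, not against anything in this paper. Measured that way, your primary (Weitzenb\"ock) route contains a genuine gap, and it is not the equality analysis you flag at the end but an earlier step. With the sign convention fixed by the $+|\nabla W^+|^2$ term in your display (i.e.\ $\Delta=\mathrm{div}\,\mathrm{grad}$), Derdzinski's identity on an Einstein four-manifold reads
\[
\Delta|W^+|^2 \;=\; 2|\nabla W^+|^2 \;+\; R\,|W^+|^2 \;-\; 36\det W^+ ,
\]
so both curvature terms carry the signs \emph{opposite} to the ones you wrote. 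You can check this on $\mathbb{CP}^2$: there $W^+$ is parallel with eigenvalues $\bigl(\tfrac{R}{6},-\tfrac{R}{12},-\tfrac{R}{12}\bigr)$, so $R|W^+|^2=\tfrac{R^3}{24}=36\det W^+$ and both sides vanish; the two curvature terms must cancel against each other, which forces the relative sign above. This is fatal for your mechanism: integrating over the closed manifold gives
\[
2\int_M|\nabla W^+|^2 \;+\; \int_M R\,|W^+|^2 \;=\; 36\int_M \det W^+ \;\le\; 2\sqrt{6}\int_M |W^+|^3 ,
\]
an inequality in the unfavorable direction. Positive scalar curvature does not ``enter decisively''; it enters with the wrong sign, and no pointwise pinching converts this into your dichotomy (either $W^+\equiv 0$ or pointwise saturation with $|W^+|$ constant). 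What this line of argument honestly yields --- even after replacing the plain Kato inequality by the refined one $|\nabla W^+|^2\ge\tfrac53\,\bigl|\nabla|W^+|\bigr|^2$, which is what one actually needs --- is only the gap theorem: either $W^+\equiv 0$ or $\max_M|W^+|\ge R/(2\sqrt6)$. The classification requires genuinely more; in Hitchin's original paper the Weitzenb\"ock identity is coupled to the topological relations $\tau=b_+$ and $\chi=2+b_+$ (available because $W^-=0$ and $R>0$ kill $b_-$ and $b_1$) and to a fiberwise analysis of $W^+$ as a binary quartic on the spinor bundle; that equality discussion is the whole proof, not a final touch one can defer.

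By contrast, your closing paragraph is a correct and complete route, and it is essentially the proof recorded in the very source the paper cites: Besse's Chapter 13 deduces Theorem 13.30 from the Friedrich--Kurke theorem (the twistor space of a compact self-dual Einstein four-manifold with $R>0$ carries a Fano K\"ahler--Einstein structure) together with Hitchin's theorem that the only compact K\"ahlerian twistor spaces are $\mathbb{CP}^3$ and the flag threefold $F_{1,2}(\mathbb{C}^3)$, whose bases are $\mathbb{S}^4$ and $\mathbb{CP}^2$. If you promote that paragraph to the main argument --- keeping your correct observations that $\delta W^+=0$ on an Einstein manifold, that $\operatorname{tr}(A^3)\le |A|^3/\sqrt6$ with equality exactly at spectrum $(2\mu,-\mu,-\mu)$, and that orientability excludes $\mathbb{RP}^4$ in the conformally flat case --- the outline is sound; as written, the Weitzenb\"ock half would fail.
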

Now by using the Theorem \ref{th2}, Theorem \ref{th4} and the Theorem \ref{thm1}, we conclude the following:
\begin{cor}
Suppose $(M,g,f)$ is a $4$-dimensional complete gradient shrinking Ricci soliton with $\frac{1}{f}$ is subharmonic. If $M$ is half-conformally flat and any one of the following conditions holds\\
(i)  $R\in L^1(M)$\\
(ii) $M$ has linear volume growth,\\
 then $M$ is isometric to $\mathbb{S}^4$ or $\mathbb{C}P^2$. Therefore, $M$ has trivial fundamental group.
\end{cor}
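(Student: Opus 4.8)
The plan is to funnel both hypotheses into a single intermediate conclusion---that $M$ is a compact Einstein manifold of positive scalar curvature---and then to quote Hitchin's classification recorded in Theorem \ref{thm1}. The guiding observation is that in Theorem \ref{th2} the compactness and the Einstein property are obtained \emph{before} conformal flatness is ever used: there conformal flatness serves only to pass from ``Einstein'' to the space form $\mathbb{S}^n$. Since Theorem \ref{thm1} asks only for the weaker half-conformally flat assumption, I may stop the earlier arguments at the Einstein stage and hand the manifold directly to Theorem \ref{thm1}. Throughout I take $M$ to be non-flat, as required by Theorem \ref{th2}; the flat Gaussian soliton on $\mathbb{R}^4$ is non-compact and hence incompatible with the stated conclusion, and under (ii) it is in any case excluded since its volume grows like $r^4$ rather than linearly.

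Under hypothesis (i), I would apply the first assertion of Theorem \ref{th2} with $n=4$. The data $R\in L^1(M)$ together with $\Delta\frac{1}{f}\geq 0$ force $M$ to be a compact Einstein manifold with positive Ricci curvature and scalar curvature $R=\frac{n}{2}=2$; in particular the scalar curvature is strictly positive. This already places us in the setting of Theorem \ref{thm1}.

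Under hypothesis (ii), the aim is to reach the same intermediate conclusion with linear volume growth in place of $R\in L^1(M)$. I would isolate from the proof of Theorem \ref{th4} the step which, using the traced soliton identity $\Delta f=\frac{n}{2}-R$, the linear volume growth, and the subharmonicity of $1/f$, shows that the scalar curvature is finitely integrable, i.e. $R\in L^1(M)$; this reduces case (ii) to case (i), after which the first part of Theorem \ref{th2} again yields a compact Einstein metric with $R=2>0$. The delicate point---and the main obstacle---is precisely here: Theorem \ref{th4} is stated for \emph{conformally} flat solitons and so cannot be quoted verbatim under the \emph{half}-conformally flat hypothesis. One must therefore confirm that the integrability (equivalently, compactness) step of its proof relies only on the soliton structure, the growth bound, and $\Delta\frac{1}{f}\geq 0$, and not on the vanishing of the full Weyl tensor.

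Finally, in either case $M$ is a compact, four-dimensional, half-conformally flat Einstein manifold of positive scalar curvature, so Theorem \ref{thm1} identifies it isometrically with $\mathbb{S}^4$ or $\mathbb{C}P^2$. Since both $\mathbb{S}^4$ and $\mathbb{C}P^2$ are simply connected, $\pi_1(M)$ is trivial, which completes the argument.
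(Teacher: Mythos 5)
Your overall architecture --- funnel both hypotheses into ``compact Einstein manifold with positive scalar curvature'' and then invoke Theorem \ref{thm1} --- is exactly the paper's intended route, and your key structural observation is correct: in the proofs of Theorems \ref{th2} and \ref{th4}, conformal flatness is used only at the very last step (Theorem \ref{t1}), so stopping at the Einstein stage lets the weaker half-conformally flat hypothesis take over. Case (i) as you present it is fine.

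The gap is in your mechanism for case (ii). You propose to ``isolate from the proof of Theorem \ref{th4}'' a step showing $R\in L^1(M)$ and thereby reduce (ii) to (i). No such step exists in that proof, and none can be isolated: linear volume growth combined with the Cao--Zhou estimate gives only $\int_{B_{2r}(p)}R\leq\frac{n}{2}Vol(B(p,2r))\leq Cr$, which is unbounded as $r\rightarrow\infty$. Under the hypotheses of (ii), finite integrability of $R$ is true only \emph{a posteriori}, once compactness has been established --- but compactness is the entire content of the argument, so using $R\in L^1(M)$ as the bridge from (ii) to (i) is circular. (The round cylinder, a conformally flat shrinker with linear volume growth and constant positive scalar curvature, shows that no such implication can come from the soliton structure and volume growth alone; it is only excluded because $1/f$ fails to be subharmonic on it.) What the proof of Theorem \ref{th4} actually does is bypass $L^1$ entirely: the prefactor $r^{-2}$ in \eqref{eq11} beats the linear growth of $\int_{B_{2r}(p)}R$, giving $\int_{B_r(p)}|\nabla\frac{1}{f}|^2\leq\frac{1}{2cr^2}Vol(B(p,2r))\leq \frac{C}{r}\rightarrow 0$, hence $f$ is constant, hence $Ric=\frac{1}{2}g$ by \eqref{eq10}, $M$ is compact by Myers, and $R=2>0$ --- and, as you correctly suspected, nothing in this chain touches the Weyl tensor. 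Substituting this direct argument for your reduction closes the gap. A secondary point: under (i) you dismiss the flat Gaussian soliton because it is ``incompatible with the stated conclusion,'' which assumes what is to be proved; it should instead be excluded by noting that flatness forces $\min f=0$ (by Hamilton's normalization \eqref{eq2}), so that $1/f$ cannot be a globally defined subharmonic function.
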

Recently, Zhang \cite{ZH20} proved the following result:
\begin{thm}\cite{ZH20}\label{thm2}
Suppose $(M,g)$ is a compact four-dimensional Einstein manifold with $Ric=g$ and the sectional curvature $K\leq \frac{1}{12}+\frac{\sqrt{30}}{8}$. Then $(M,g)$ is isometric to either $\mathbb{S}^4$ or $\mathbb{RP}^4$ or $\mathbb{CP}^2$.
\end{thm}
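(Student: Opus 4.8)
The plan is to exploit the special structure of the curvature of a four-dimensional Einstein manifold together with the pinching $K \le \kappa$, where $\kappa = \tfrac{1}{12} + \tfrac{\sqrt{30}}{8}$, to force $(M,g)$ to be either conformally flat or half-conformally flat; the classification then follows from the space-form theorem and from Theorem \ref{thm1}. First I would normalize and set up the algebra. Since $Ric = g$ in dimension four, the scalar curvature is $R = 4$, and (after passing to the oriented double cover if $M$ is non-orientable, which accounts for the appearance of $\mathbb{RP}^4$) the bundle of $2$-forms splits as $\Lambda^2 = \Lambda^+ \oplus \Lambda^-$. Because the trace-free Ricci tensor vanishes, the curvature operator is block-diagonal,
\[
\mathcal{R} = \begin{pmatrix} W^+ + \tfrac{R}{12}\,\mathrm{Id} & 0 \\ 0 & W^- + \tfrac{R}{12}\,\mathrm{Id}\end{pmatrix},
\]
with $W^\pm$ trace-free symmetric endomorphisms of $\Lambda^\pm$. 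Writing the eigenvalues of $W^+$ and $W^-$ as $a_1 \le a_2 \le a_3$ and $b_1 \le b_2 \le b_3$ (each summing to $0$), every tangent $2$-plane is represented by a unit decomposable $2$-vector whose self-dual and anti-self-dual parts have norm $\tfrac{1}{\sqrt2}$ and may be prescribed independently, so the sectional curvatures fill the interval $[\tfrac13 + \tfrac12(a_1+b_1),\ \tfrac13 + \tfrac12(a_3+b_3)]$. Hence $K \le \kappa$ is equivalent to the pointwise bound $a_3 + b_3 \le 2\kappa - \tfrac23$, while $Ric = g$ forces $K \ge 1 - 2\kappa$ everywhere, since the three sectional curvatures through any direction sum to $1$. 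I note already that $\mathbb{S}^2\times\mathbb{S}^2$ with $Ric=g$ has maximal sectional curvature $1 > \kappa$ and is therefore excluded.

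The analytic engine is Derdzinski's Weitzenböck formula for the half-Weyl tensors of an Einstein four-manifold, which (the second Bianchi identity making $W^\pm$ harmonic) reads $\nabla^*\nabla W^\pm = \tfrac{R}{2}W^\pm - 6\,\overset{\circ}{(W^\pm)^2}$. Pairing with $W^\pm$ and integrating over the closed manifold gives
\[
\int_M |\nabla W^\pm|^2 \, dV = \int_M \Big( \tfrac{R}{2}|W^\pm|^2 - 18\det W^\pm \Big)\, dV .
\]
Here the sharp algebraic inequality $|\det A| \le \tfrac{1}{3\sqrt{6}}|A|^3$ for trace-free symmetric $3\times 3$ matrices, with equality exactly when two eigenvalues coincide --- the model being the Fubini--Study $W^+$ of $\mathbb{CP}^2$, whose eigenvalues are proportional to $(-1,-1,2)$ --- is the link between the cubic term and $|W^\pm|^2$. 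I would also record the Gauss--Bonnet and signature identities $8\pi^2\chi(M) = \int_M(|W^+|^2 + |W^-|^2 + \tfrac{R^2}{24})\,dV$ and $12\pi^2\tau(M) = \int_M(|W^+|^2 - |W^-|^2)\,dV$, which supply the topological constraints.

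The crux, and the step I expect to be the main obstacle, is to feed the eigenvalue bounds $a_3 + b_3 \le 2\kappa - \tfrac23$ and $K \ge 1-2\kappa$ into the integrated Weitzenböck identities so as to make the right-hand sides have a definite sign. The number $\kappa = \tfrac{1}{12} + \tfrac{\sqrt{30}}{8}$ should emerge precisely as the threshold at which the constrained optimization of $\tfrac{R}{2}|W^\pm|^2 - 18\det W^\pm$ over admissible eigenvalue configurations turns non-positive, so that $\int_M|\nabla W^\pm|^2\,dV \le 0$; the quantity $\sqrt{30}$ is the value of the extremal multiplier in this optimization. Forcing $\int_M|\nabla W^\pm|^2\,dV = 0$ then makes $W^+$ (or, after the orientation choice, $W^-$) parallel, with eigenvalues locked into either the zero configuration or the two-equal $\mathbb{CP}^2$ configuration; a connectedness argument upgrades this to a global dichotomy: $M$ is conformally flat, or $M$ is half-conformally flat.

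Finally I would classify. If $W \equiv 0$, then $M$ is a conformally flat Einstein manifold, hence of constant sectional curvature; being compact with $Ric = g$ it is a spherical space form with $K = \tfrac13$, and the only possibilities are $\mathbb{S}^4$ and its quotient $\mathbb{RP}^4$. If instead $M$ is half-conformally flat but not conformally flat, then it is a compact half-conformally flat Einstein manifold with $R = 4 > 0$, and Theorem \ref{thm1} gives $\mathbb{S}^4$ or $\mathbb{CP}^2$. Combining the two cases, $(M,g)$ is isometric to $\mathbb{S}^4$, $\mathbb{RP}^4$, or $\mathbb{CP}^2$, as claimed.
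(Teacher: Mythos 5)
First, a point of order: the paper does not prove this statement at all --- it is imported as a black box from Zhang \cite{ZH20} --- so your proposal can only be judged against what a proof of this classification actually requires, not against an argument in the text.

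Your framework is the standard one and most supporting claims check out: for an oriented Einstein four-manifold the curvature operator preserves $\Lambda^\pm$, unit decomposable $2$-forms correspond to independently prescribable half-norm parts, so with $Ric=g$ (hence $R=4$) the hypothesis $K\le\kappa$ is exactly the pointwise bound $a_3+b_3\le 2\kappa-\tfrac23$ on the top eigenvalues of $W^\pm$; the endgame (space form if $W\equiv 0$, Theorem \ref{thm1} if half-conformally flat, $\mathbb{S}^2\times\mathbb{S}^2$ excluded since its maximal curvature is $1>\kappa$) is also essentially right, provided the case where both $W^+$ and $W^-$ are parallel and nonzero is handled via local symmetry rather than your stated dichotomy. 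The fatal defect is the step you yourself call the crux, and it is not merely unproven --- as described, it is false. You want the Weitzenb\"ock integrand to turn non-positive under constrained optimization over eigenvalue configurations with $a_3+b_3\le 2\kappa-\tfrac23$. Take $W^-=0$ and $W^+$ with eigenvalues $\bigl(-\tfrac{s}{2},-\tfrac{s}{2},s\bigr)$, $s=2\kappa-\tfrac23=\tfrac{\sqrt{30}}{4}-\tfrac12\approx 0.87$. This configuration is admissible (it also satisfies the induced lower bound $K\ge 1-2\kappa$), yet $18\bigl(\det W^++\det W^-\bigr)-2\bigl(|W^+|^2+|W^-|^2\bigr)=\tfrac92 s^3-3s^2=3s^2\bigl(\tfrac32 s-1\bigr)>0$ because $s>\tfrac23$. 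In fact the pointwise mechanism certifies non-positivity precisely when $2\kappa-\tfrac23\le\tfrac23$, i.e.\ $\kappa\le\tfrac23$, which is exactly the maximal sectional curvature of the Fubini--Study metric (the equality case). Zhang's constant $\tfrac1{12}+\tfrac{\sqrt{30}}{8}\approx 0.77$ is strictly larger, so no pointwise sign argument of the kind you sketch can reach it; passing beyond $\tfrac23$ is precisely where genuinely global tools (e.g.\ the refined Kato inequality for harmonic Weyl tensors, or integral estimates interacting with the Gauss--Bonnet and signature identities) must enter, and that is the actual content of \cite{ZH20}, which your outline replaces with the hope that the constant ``should emerge.''

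A further correction: your integrated Weitzenb\"ock identity has the wrong sign. With your conventions (in which $\det W^+>0$ for $\mathbb{CP}^2$) it reads $\int_M|\nabla W^\pm|^2\,dV=\int_M\bigl(18\det W^\pm-\tfrac{R}{2}|W^\pm|^2\bigr)dV$. The discrepancy is invisible on $\mathbb{CP}^2$, where both sides vanish, but it matters for the logic: with the sign you wrote, the right-hand side is strictly positive wherever $\det W^\pm\le 0$ and $W^\pm\neq 0$, no matter how small $|W^\pm|$ is, so no pinching hypothesis whatsoever could force $\int_M|\nabla W^\pm|^2\,dV\le 0$ along your route --- not even the classical gap theorem ($|W^+|\le R/(2\sqrt6)$ implies $\nabla W^+=0$), which with the correct sign follows immediately from $\det A\le |A|^3/(3\sqrt6)$.
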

By using the Theorem \ref{thm2} and proof of Theorem \ref{th4}, the following can be stated:
\begin{cor}
Let $(M,g,f)$ be a $4$-dimensional complete gradient shrinking Ricci soliton with linear volume growth and sectional curvature $K\leq \frac{1}{12}+\frac{\sqrt{30}}{8}$. If $\frac{1}{f}$ is subharmonic, then $(M,g)$ is isometric to either $\mathbb{S}^4$ or $\mathbb{RP}^4$ or $\mathbb{CP}^2$.
\end{cor}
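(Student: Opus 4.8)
The plan is to isolate, inside the proof of Theorem~\ref{th4}, precisely the portion that produces a compact Einstein manifold and does \emph{not} rely on conformal flatness, and then to replace the conformally-flat rigidity step by Zhang's pinching theorem (Theorem~\ref{thm2}). Just as in Theorem~\ref{th2}, where the conclusion that $(M,g)$ is a compact Einstein manifold with $Ric=\frac{1}{2}g$ and $R=\frac{n}{2}$ is reached \emph{before} conformal flatness is ever used (the latter entering only to upgrade ``Einstein'' to ``space form''), the argument behind Theorem~\ref{th4} should first use the linear volume growth together with the subharmonicity of $\frac{1}{f}$ to force the soliton to be trivial, hence $f$ constant and $(M,g)$ a compact Einstein manifold. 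I would run this first stage verbatim with $n=4$, obtaining a compact Einstein $4$-manifold with $Ric=\frac{1}{2}g$, and at no point in this stage invoke conformal flatness.

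With a compact Einstein $4$-manifold in hand, I would then bring the normalization into line with that of Theorem~\ref{thm2}. Equation~(\ref{eq10}) is normalized with $\lambda=\frac{1}{2}$, so the Einstein metric obtained above satisfies $Ric=\frac{1}{2}g$, while Theorem~\ref{thm2} is phrased for $Ric=g$. Passing to the homothetic metric $\tilde g=\frac{1}{2}g$ gives $\widetilde{Ric}=\tilde g$ and multiplies all sectional curvatures by a factor of $2$. After this rescaling the hypotheses of Theorem~\ref{thm2} (compact, four-dimensional, $\widetilde{Ric}=\tilde g$, and the pinching bound) are met, and Theorem~\ref{thm2} then yields that $(M,g)$ is isometric to $\mathbb{S}^4$, $\mathbb{RP}^4$ or $\mathbb{CP}^2$, as claimed.

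Since the analytic heart of the statement is supplied by the first stage of Theorem~\ref{th4} and by Theorem~\ref{thm2}, both of which I may assume, the remaining work is essentially formal, and the one step I expect to require real care is the normalization bookkeeping for the curvature bound. Because the homothety $\tilde g=\frac{1}{2}g$ that achieves $\widetilde{Ric}=\tilde g$ doubles sectional curvatures, one must be explicit about whether the constant $\frac{1}{12}+\frac{\sqrt{30}}{8}$ is imposed on $g$ or on $\tilde g$; to align with Theorem~\ref{thm2} it should be read as the bound in the normalization $Ric=g$. A secondary point to confirm is that the compactness-plus-Einstein conclusion is genuinely detachable from the conformal-flatness hypothesis of Theorem~\ref{th4} — this is exactly the structure already visible in Theorem~\ref{th2}, and it is what makes the present reduction, and the parallel half-conformally-flat corollary, go through.
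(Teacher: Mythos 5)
Your proposal is correct and takes essentially the same route the paper intends: the paper offers no separate proof, simply invoking the first stage of the proof of Theorem~\ref{th4} (linear volume growth plus subharmonicity of $\frac{1}{f}$ forces $f$ constant, hence $M$ compact Einstein with $Ric=\frac{1}{2}g$ --- a stage that never uses conformal flatness) and then substituting Zhang's Theorem~\ref{thm2} for the Goldberg space-form step. Your explicit homothetic rescaling $\tilde g=\frac{1}{2}g$ to pass from $Ric=\frac{1}{2}g$ to $\widetilde{Ric}=\tilde g$, and your observation that the pinching constant $\frac{1}{12}+\frac{\sqrt{30}}{8}$ must be read in the $Ric=g$ normalization (sectional curvatures double under this rescaling), is bookkeeping the paper silently glosses over, and you handle it correctly.
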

Next using the proof of the Theorem \ref{th2}, we will prove the following compactness criteria:
\begin{thm}\label{th3}
If an $n$-dimensional gradient shrinking Ricci soliton $(M,g,f)$ realizes the following conditions\\
(i)  $Vol(B_r(p))\leq Cr^2$ for all $p\in M$ and $r>0$, where $C>0$ is a constant,\\
(ii) $f$ satisfies the inequality $\Delta f^2\leq |\nabla f|^2$,\\
 then $M$ is compact.
\end{thm}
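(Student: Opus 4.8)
The plan is to use the two hypotheses to force the potential function $f$ to be constant; once this is done, the soliton equation collapses to an Einstein equation with positive Ricci curvature, and compactness is immediate from Bonnet--Myers. I take $f$ to be the potential function in the normalization making it nonnegative, and I record the two identities I will rely on: the trace of \eqref{eq10}, namely $\Delta f=\tfrac{n}{2}-R$, together with the elementary formula $\Delta(f^{2})=2|\nabla f|^{2}+2f\,\Delta f$. Since a complete shrinker satisfies $R\ge 0$, with $R\equiv 0$ only in the flat Gaussian case, and since the flat Gaussian is excluded by either hypothesis---its Euclidean volume growth violates (i) when $n\ge 3$, while a one-line computation shows it violates (ii)---I may and do assume $f>0$ on all of $M$.

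Starting from hypothesis (ii) and the formula for $\Delta(f^{2})$, I would first extract superharmonicity of $f$. Indeed $\Delta(f^{2})\le|\nabla f|^{2}$ rewrites as $2f\,\Delta f\le-|\nabla f|^{2}$, so that
\[
f\,\Delta f\le-\tfrac12|\nabla f|^{2}\le 0 ,
\]
and dividing by $f>0$ gives $\Delta f\le-\dfrac{|\nabla f|^{2}}{2f}\le 0$; equivalently, by the trace identity, $R\ge\tfrac{n}{2}$ everywhere. Thus $f$ is a superharmonic function on $M$, and it is bounded below (it is nonnegative, and in the noncompact case the Cao--Zhou estimate $f\ge\tfrac14(r(x)-c)^{2}$, with $r(x)$ the distance to a fixed point, confirms a lower bound).

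Next I would bring in hypothesis (i). The quadratic bound $Vol(B_{r}(p))\le Cr^{2}$ yields
\[
\int_{1}^{\infty}\frac{r\,dr}{Vol(B_{r}(p))}\ge\frac{1}{C}\int_{1}^{\infty}\frac{dr}{r}=+\infty,
\]
so by the Grigor'yan--Karp--Varopoulos parabolicity criterion $M$ is parabolic, and hence every superharmonic function on $M$ that is bounded below is constant. Applied to $f$, this forces $f\equiv\mathrm{const}$, whence $\nabla^{2}f=0$ and \eqref{eq10} reduces to $Ric=\tfrac12 g$. This is an Einstein metric of positive Ricci curvature, so Bonnet--Myers makes $M$ compact (with finite fundamental group). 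This is precisely the mechanism behind Theorem \ref{th2}: there ``$R\in L^{1}(M)$'' plays the role of the parabolicity input and ``$1/f$ bounded and subharmonic'' that of the Liouville input, while here these two roles are supplied by (i) and (ii), respectively.

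The step I expect to be the main obstacle is the rigorous implementation of the Liouville argument: one must cite the sharp parabolicity criterion in the form above and verify that $f$ itself---not merely a truncation of it---is globally bounded below, so that the criterion applies directly to $f$. Two smaller points need care. First, hypothesis (ii) is imposed on the given normalized $f$, so no additive shift of $f$ is permitted anywhere in the argument; this is why I fix the nonnegative normalization at the very start. Second, the positivity $f>0$ used to divide in the superharmonicity step must be secured by excluding the degenerate (flat) case, which is exactly the point at which the non-flatness forced by (i)--(ii) is genuinely used.
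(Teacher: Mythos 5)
Your proof is correct, and it takes a genuinely different route from the paper's. The paper's Lemma \ref{lm1} extracts from hypothesis (ii) only the weaker fact that $1/f$ (and hence $1/f^2$) is subharmonic; it then needs the scalar-curvature lower bound $R \geq nc/f^2$ of (\ref{eq13}) (whose derivation is the entire maximum-principle argument from the proof of Theorem \ref{th2}), plus the Cao--Zhou estimate $\int_{B_r(p)} R \leq \frac{n}{2} Vol(B(p,r))$, to turn hypothesis (i) into the bound $\frac{1}{r^2}\int_{B_r(p)} f^{-2}\,dV \leq C'$, and finally Karp's growth theorem (Theorem \ref{th1}) to force $1/f$ to be constant. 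You instead use the full strength of (ii): since $\Delta(f^2)=2f\Delta f+2|\nabla f|^2$, hypothesis (ii) is \emph{equivalent} to $2f\Delta f \leq -|\nabla f|^2$, so $f$ itself is superharmonic (a strictly stronger conclusion than Lemma \ref{lm1}, which it implies); and you use (i) only through the standard volume-growth parabolicity criterion ($\int^\infty r\,dr/Vol(B_r)=\infty$ implies $M$ is parabolic), after which ``superharmonic, bounded below, parabolic $\Rightarrow$ constant'' replaces (\ref{eq13}), Cao--Zhou, and Karp in one stroke. Both arguments finish identically: $f$ constant gives $Ric=\frac{1}{2}g$, and Myers's theorem gives compactness. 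Your route is shorter and, as a bonus, more careful on one point: the paper's proof begins ``since $M$ is non-flat'' although non-flatness is not among the hypotheses of Theorem \ref{th3}, whereas you actually verify that the Gaussian shrinker is incompatible with (ii) (and with (i) when $n\geq 3$), which is what legitimizes $R>0$ and hence the division by $f>0$ needed in either proof. What the paper's heavier route buys is self-containment relative to its own toolkit---it invokes only results already quoted elsewhere in the paper (Karp's theorem, Cao--Zhou)---while yours imports the parabolicity criterion of Grigor'yan--Karp--Varopoulos (or Cheng--Yau) as an external ingredient.
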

\section{Proof of the results}
To prove the Theorem \ref{th2}, we need the following result:
\begin{thm}\cite{GO69}\label{t1}
A conformally flat compact manifold with metric of positive Ricci
curvature and constant scalar curvature is a space form.
\end{thm}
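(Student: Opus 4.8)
The plan is to show that the four hypotheses force the metric to be Einstein, and then to invoke the elementary fact that a conformally flat Einstein manifold has constant sectional curvature. Throughout I write $\lambda_1,\dots,\lambda_n$ for the pointwise eigenvalues of the Ricci tensor and $\mathring{Ric}=Ric-\frac{R}{n}g$ for its trace-free part, so that $M$ is Einstein precisely when $\mathring{Ric}\equiv 0$.

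First I would exploit conformal flatness to rewrite the full curvature tensor in terms of the Ricci tensor: for $n\geq 3$ the vanishing of the Weyl tensor (for $n=3$, of the Cotton tensor) gives
\[
R_{ijkl}=\tfrac{1}{n-2}\big(g_{ik}R_{jl}-g_{il}R_{jk}+g_{jl}R_{ik}-g_{jk}R_{il}\big)-\tfrac{R}{(n-1)(n-2)}\big(g_{ik}g_{jl}-g_{il}g_{jk}\big).
\]
Differentiating this identity and feeding it into the second Bianchi identity shows that the Cotton tensor vanishes, i.e. $\nabla_k R_{ij}-\nabla_j R_{ik}=\frac{1}{2(n-1)}\big(g_{ij}\nabla_k R-g_{ik}\nabla_j R\big)$. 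Here is where the hypothesis of \emph{constant scalar curvature} enters decisively: it kills the right-hand side, so that $Ric$ is a Codazzi tensor, $\nabla_k R_{ij}=\nabla_j R_{ik}$, and by the contracted Bianchi identity it is also divergence free.

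The heart of the argument is a Bochner/Simons-type identity for $\mathring{Ric}$. Using the Codazzi property to commute the derivatives in $\Delta R_{ij}=\nabla_k\nabla_k R_{ij}=\nabla_k\nabla_i R_{kj}$ and then substituting the conformally flat expression for the curvature, I expect the quadratic curvature term to collapse, in an eigenbasis of $Ric$, to one manifestly favourable expression, giving
\[
\tfrac12\Delta|\mathring{Ric}|^2=|\nabla\mathring{Ric}|^2+c\sum_{i}\lambda_i\Big(\lambda_i-\tfrac{R}{n}\Big)^2
\]
for a positive dimensional constant $c$. The mechanism is that the off-diagonal sectional-curvature contributions cancel because $\mathring{Ric}$ is trace free, leaving only the diagonal sum over eigenvalues. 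Since the Ricci curvature is \emph{positive}, every $\lambda_i>0$, so the last term is nonnegative; integrating over the \emph{compact} manifold $M$ annihilates the left-hand side and forces both $\nabla\mathring{Ric}\equiv0$ and $\sum_i\lambda_i(\lambda_i-\tfrac{R}{n})^2\equiv 0$. Positivity of the $\lambda_i$ then yields $\lambda_1=\dots=\lambda_n$, i.e. $M$ is Einstein.

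It remains to conclude. Substituting $R_{ij}=\frac{R}{n}g_{ij}$ back into the conformally flat curvature formula gives $R_{ijkl}=\frac{R}{n(n-1)}(g_{ik}g_{jl}-g_{il}g_{jk})$, so $M$ has constant sectional curvature $\frac{R}{n(n-1)}$, which is positive because $Ric>0$ forces $R>0$; hence $M$ is a space form. The step I expect to be the main obstacle is the Bochner computation: one must carry out the curvature contraction carefully enough to confirm that the zeroth-order term is exactly $c\sum_i\lambda_i(\lambda_i-\tfrac{R}{n})^2$ with the correct positive sign, since it is only the positivity of the \emph{individual} Ricci eigenvalues, not merely of the scalar curvature, that makes this term nonnegative and thereby drives the rigidity.
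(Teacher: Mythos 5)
The paper itself gives no proof of this statement: Theorem \ref{t1} is quoted from Goldberg's paper [GO69] and used as a black box, so your attempt can only be compared with the classical argument that citation refers to. Your skeleton is indeed that argument: conformal flatness together with constant scalar curvature makes $Ric$ a Codazzi tensor; one integrates the Weitzenb\"ock formula for Codazzi tensors with constant trace over the compact manifold; positivity of the Ricci eigenvalues forces the curvature term to vanish, hence $M$ is Einstein; and a conformally flat Einstein manifold has constant (here positive) sectional curvature. The Codazzi step and the final step are correct as you state them.

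The gap sits exactly at the step you yourself flagged as the main obstacle, and it is a real one: the identity you hope for is false. For a Codazzi tensor with constant trace the zeroth-order term is $\sum_{i<j}K_{ij}(\lambda_i-\lambda_j)^2$, where $K_{ij}$ is the sectional curvature of the plane spanned by the Ricci eigenvectors $e_i,e_j$; substituting the conformally flat curvature gives
\[
Q=\sum_{i<j}\Bigl(\tfrac{\lambda_i+\lambda_j}{n-2}-\tfrac{R}{(n-1)(n-2)}\Bigr)(\lambda_i-\lambda_j)^2
=\frac{n}{n-2}\sum_i\mathring{\lambda}_i^{\,3}+\frac{R}{n-1}\,|\mathring{Ric}|^2,
\qquad \mathring{\lambda}_i=\lambda_i-\tfrac{R}{n}.
\]
This is not $c\sum_i\lambda_i(\lambda_i-R/n)^2$ for any dimensional constant $c$: in dimension $3$ the eigenvalues $(1,1,4)$ give $Q=36$ while $\sum_i\lambda_i(\lambda_i-R/n)^2=18$, forcing $c=2$, whereas $(1,2,3)$ gives $Q=6$ but $c\cdot 4=8$. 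Moreover no termwise positivity is available: the cubic term $\sum_i\mathring{\lambda}_i^{\,3}$ has no sign, and individual $K_{ij}$ can be negative even when $Ric>0$ (for $n=4$ and eigenvalues $(\varepsilon,\varepsilon,1,1)$ one gets $K_{12}=(2\varepsilon-1)/3<0$ for small $\varepsilon$). So the ``collapse to a manifestly favourable diagonal expression'' you invoke does not occur, and with it the entire rigidity step is unproven. The needed statement --- $Q\geq 0$ whenever all $\lambda_i>0$, with equality only in the Einstein case --- is true but requires a genuine algebraic argument: writing $(n-1)(n-2)Q=n(n-1)S_3-(2n-1)S_1S_2+S_1^3$ in the power sums $S_k=\sum_i\lambda_i^k$, one checks that this equals twice the sum, over all triples $\{i,j,k\}$ of eigenvalues, of Schur's inequality
$\lambda_i(\lambda_i-\lambda_j)(\lambda_i-\lambda_k)+\lambda_j(\lambda_j-\lambda_i)(\lambda_j-\lambda_k)+\lambda_k(\lambda_k-\lambda_i)(\lambda_k-\lambda_j)\geq 0$,
whose equality case (all three equal, or two equal and one zero) combined with strict positivity of the $\lambda_i$ forces all eigenvalues to coincide. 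With that lemma inserted in place of your claimed identity, the rest of your argument goes through.
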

\begin{proof}[\textbf{Proof of Theorem \ref{th2}}]
Taking trace of (\ref{eq10}), we get
\begin{equation}\label{eq1}
R+\Delta f=\frac{n}{2}.
\end{equation}
Again Hamilton \cite{HA95} proved that the potential function $f$ in a shrinking gradient Ricci soliton, after adding some constant if necessary, satisfies
\begin{equation}\label{eq2}
R+|\nabla f|^2=f.
\end{equation}
Since $M$ is shrinking Ricci soliton, the scalar curvature is non-negative \cite{ZH09} and also $f$ satisfies the asymptotic behavior \cite{CZ10}, i.e., there exist positive constants $C$ and $C'$ such that $f$ obeys the estimation
\begin{equation}\label{eq8}
\frac{1}{4}\Big[(d(x,p)-C)_+\Big]^2\leq f(x)\leq \frac{1}{4}\Big(d(x,p)+C'\Big)^2,
\end{equation}
where $C_+=max(C,0)$.
 Again since $M$ is non-flat \cite{ CH09}, $R>0$. Thus  (\ref{eq2}), yields $f>0$. Therefore, in normal coordinate we calculate 
$$\Big(\frac{1}{f} \Big)_i=-\frac{1}{f^2}f_i,\
\nabla \Big(\frac{1}{f}\Big)=-\frac{1}{f^2}\nabla f,$$
and
$$\Big(\frac{1}{f} \Big)_{ii}=\Big(-\frac{1}{f^2}f_i\Big)_i=\frac{2}{f^3}f_i^2-\frac{1}{f^2}f_{ii}.$$
Therefore
\begin{equation}\label{eq9}
\Delta \Big(\frac{1}{f}\Big)=\frac{2}{f^3}|\nabla f|^2-\frac{1}{f^2}\Delta f.
\end{equation}
Similarly, we calculate
$$\Big(\frac{1}{f^2} \Big)_i=-\frac{2}{f^3}f_i,\
\nabla \Big(\frac{1}{f^2}\Big)=-\frac{2}{f^3}\nabla f,$$
and
$$\Big(\frac{1}{f^2} \Big)_{ii}=\Big(-\frac{2}{f^3}f_i\Big)_i=\frac{6}{f^4}f_i^2-\frac{2}{f^3}f_{ii}.$$
Hence
\begin{equation}
\Delta \Big(\frac{1}{f^2}\Big)=\frac{6}{f^4}|\nabla f|^2-\frac{2}{f^3}\Delta f.
\end{equation}
For a fixed constant $c>0$, as introduced in \cite{CLY11}, we define the function $u$ in $M$ by
$$u(x)=R(x)-\frac{c}{f(x)}-\frac{nc}{f^2(x)}\ \text{ for }x\in M.$$
Hence, using weighted Laplacian, (\ref{eq1}) and (\ref{eq2}), we obtain
\begin{eqnarray}\label{eq4}
\nonumber\Delta_f(f^{-1})&=& \Delta f^{-1}-g(\nabla f,\nabla f^{-1})\\
\nonumber&=& \frac{2}{f^3}|\nabla f|^2-\frac{1}{f^2}\Delta f-g(\nabla f,-\frac{1}{f^2}\nabla f)\\
\nonumber&=& \frac{2}{f^3}|\nabla f|^2-\frac{1}{f^2}\Delta f+\frac{1}{f^2}|\nabla f|^2\\
\nonumber&=& \frac{2}{f^3}|\nabla f|^2-\frac{1}{f^2}(\frac{n}{2}-R)+\frac{1}{f^2}(f-R)\\
\nonumber&=& \frac{2}{f^3}|\nabla f|^2-\frac{n}{2f^2}+\frac{1}{f}\\
&=& \frac{1}{f}-\frac{1}{f^2}\Big(\frac{n}{2}-\frac{2|\nabla f|^2}{f} \Big)
\end{eqnarray}
and
\begin{eqnarray}\label{eq3}
\nonumber\Delta_f(f^{-2})&=& \Delta f^{-2}-g(\nabla f,\nabla f^{-2})\\
\nonumber&=& \frac{6}{f^4}|\nabla f|^2-\frac{2}{f^3}\Delta f+\frac{2}{f^3}|\nabla f|^2\\
\nonumber&=& \frac{6}{f^4}|\nabla f|^2-\frac{2}{f^3}(\frac{n}{2}-R)+\frac{2}{f^3}(f-R)\\
\nonumber&=& \frac{6}{f^4}|\nabla f|^2-\frac{n}{f^3}+\frac{2}{f^2}\\
&=& \frac{2}{f^2}-\frac{1}{f^3}\Big(n-\frac{6|\nabla f|^2}{f} \Big).
\end{eqnarray}
From \cite{ENM08}, we get
$$\Delta R=g(\nabla f,\nabla R)+R-2|Ric|^2.$$
Then, we conclude
\begin{equation}\label{eq5}
\Delta_f R=R-2|Ric|^2.
\end{equation}
Now, (\ref{eq4}) and (\ref{eq5}) together imply that for any constant $c>0$
\begin{eqnarray}\label{eq6}
\nonumber\Delta_f\Big(R-\frac{c}{f}\Big)&=& R-2|Ric|^2-\frac{c}{f}+\frac{c}{f^2}\Big(\frac{n}{2}-\frac{2|\nabla f|^2}{f} \Big)\\
&\leq & R-\frac{c}{f}+\frac{c}{f^2}\Big(\frac{n}{2}-\frac{2|\nabla f|^2}{f} \Big).
\end{eqnarray}
Then, from above inequality and (\ref{eq3}), we obtain
\begin{eqnarray}\label{eq7}
\nonumber\Delta_f u &=& \Delta_f R-\Delta_f\Big(\frac{c}{f}\Big)-\Delta_f\Big(\frac{nc}{f^2}\Big)\\
\nonumber&\leq & R-\frac{c}{f}+\frac{c}{f^2}\Big(\frac{n}{2}-\frac{2|\nabla f|^2}{f} \Big)-\Big(\frac{2}{f^2}-\frac{1}{f^3}\Big(n-\frac{6|\nabla f|^2}{f} \Big)\Big)\\
&=& u-\frac{cn}{f^3}\Big(\frac{f}{2}-n\Big)-\frac{c}{f^4}\Big(2f+6n\Big)|\nabla f|^2.
\end{eqnarray}
Now for sufficiently small $c>0$ we can consider $u$ as positive inside $B(p,C+3n)$. Let us suppose that there is a point $x_0\in M-B(p,C+3n)$ where $u$ attains its negative minimum. Then evaluation (\ref{eq7}) at the point $x_0$ and using maximum principle, we get $\frac{f(x_0)}{2}\leq n$. Again (\ref{eq8}) shows that $f(x_0)\geq\frac{9n^2}{4}$, which leads to a contradiction. Hence, we conclude that $u\geq0$ in $M$. Therefore for sufficiently small $c>0$, we obtain 
\begin{equation}\label{eq13}
R\geq \frac{c}{f}+\frac{cn}{f^2}\geq \frac{c}{f}.
\end{equation}
Now construct a cut-off function, introduced in \cite{CC96}, $\varphi_r\in C^\infty_0(B(p,2r))$ in $M$ with the property
\[ \begin{cases} 
	  0\leq \varphi_r\leq 1 &\text{ in }B(p,2r)\\
      \varphi_r=1  & \text{ in }B(p,r) \\
      |\nabla \varphi_r|\leq\frac{1}{2r}& \text{ in }B(p,2r) \\
      \varphi_r=0 &  \text{ in }\partial B(p,2r).
   \end{cases}
\]
Since $\frac{1}{f}$ is subharmonic, it follows that 
\begin{eqnarray*}
0&\leq & \int_{B_{2r}(p)}\varphi_r^2\frac{1}{f}\Delta\Big(\frac{1}{f}\Big)\\
&=&-\int_{B_{2r}(p)}\varphi_r^2|\nabla \frac{1}{f}|^2-2\int_{B_{2r}(p)}\varphi_r\frac{1}{f}g\Big(\nabla\varphi_r,\nabla\frac{1}{f}\Big).
\end{eqnarray*}
Then we have
\begin{eqnarray*}
&&\int_{B_{2r}(p)}\varphi_r^2|\nabla \frac{1}{f}|^2 \leq  -2\int_{B_{2r}(p)}\varphi_r\frac{1}{f}g\Big(\nabla\varphi_r,\nabla\frac{1}{f}\Big)\\
&&\leq 2\Big(\int_{B_{2r}(p)}\varphi_r^2|\nabla \frac{1}{f}|^2 \Big)^{1/2}\Big(\int_{B_{2r}(p)}\big(\frac{1}{f}\big)^2|\nabla \varphi_r|^2 \Big)^{1/2}.
\end{eqnarray*}
Therefore, we obtain
\begin{eqnarray}\label{in1}
\nonumber\int_{B_{r}(p)}|\nabla \frac{1}{f}|^2 &\leq & \int_{B_{2r}(p)}\varphi_r^2|\nabla \frac{1}{f}|^2 \leq 4\int_{B_{2r}(p)}\big(\frac{1}{f}\big)^2|\nabla \varphi_r|^2\\
&\leq & \frac{1}{r^2}\int_{B_{2r}(p)}\big(\frac{1}{f}\big)^2.
\end{eqnarray}
Now from (\ref{eq13}), we get
$$R\geq \frac{nc}{f^2}\text{ in }M.$$
Then the inequality (\ref{in1}) reduces to
\begin{equation}\label{eq11}
\int_{B_{r}(p)}|\nabla \frac{1}{f}|^2\leq \frac{1}{ncr^2}\int_{B_{2r}(p)}R.
\end{equation}
Since $R\in L^1(M)$, taking limit $r\rightarrow\infty$ in both sides we get
$|\nabla \frac{1}{f}|^2=0$, which implies that $f$ is constant. Therefore, we conclude, using Myers's theorem, from (\ref{eq10}) that $M$ is compact with constant scalar curvature $R=\frac{n}{2}$. Also from (\ref{eq10}) we state that $M$ is Einstein. Now for the second part, if $M$ is conformally flat, then we conclude from Theorem \ref{t1}, that $M$ is the space form. Again, the Ricci curvature of $M$ is positive, therefore, $M$ has constant positive sectional curvature. Thus, $M$ is isometric to $\mathbb{S}^n$.
\end{proof}
\begin{proof}[\textbf{Proof of Theorem \ref{th4}}]
The scalar curvature $R$ of gradient shrinking Ricci soliton satisfies the following inequality \cite{CZ10}:
$$\int_{B_r(p)}R\leq\frac{n}{2}Vol(B(p,r)),$$
for any $p\in M$. Therefore, (\ref{eq11}) implies that
\begin{equation}
\int_{B_{r}(p)}|\nabla \frac{1}{f}|^2\leq \frac{1}{2cr^2}Vol(B(p,2r)).
\end{equation}
On the other hand, the manifold $M$ satisfies the linear volume growth condition. Hence, taking the limit $r\rightarrow\infty$ of the above inequality, we get $|\nabla \frac{1}{f}|^2=0$. Therefore, using the same argument of the proof of Theorem \ref{th2}, we conclude that the result.
\end{proof}
For the proof of Theorem \ref{th3}, the following results are needed:
\begin{lem}\label{lm1}
If $f>0$ and $\Delta f^2\leq |\nabla f|^2$, then $(\frac{1}{f})$ is subharmonic.
\end{lem}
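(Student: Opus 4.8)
The plan is to reduce the claim to the pointwise formula for $\Delta\bigl(\frac{1}{f}\bigr)$ already recorded in (\ref{eq9}) and to rewrite the hypothesis $\Delta f^2\leq|\nabla f|^2$ purely in terms of $\Delta f$. First I would expand the Laplacian of the square by the product rule, obtaining
\[
\Delta f^2=2f\Delta f+2|\nabla f|^2.
\]
Feeding this into the hypothesis gives $2f\Delta f+2|\nabla f|^2\leq|\nabla f|^2$, that is, $2f\Delta f\leq-|\nabla f|^2$. Since $f>0$, this forces $\Delta f\leq-\frac{1}{2f}|\nabla f|^2\leq0$; so the real content of the assumption is simply that $f$ is superharmonic.

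Next I would invoke (\ref{eq9}), namely $\Delta\bigl(\frac{1}{f}\bigr)=\frac{2}{f^3}|\nabla f|^2-\frac{1}{f^2}\Delta f$. Because $f>0$, the term $\frac{2}{f^3}|\nabla f|^2$ is nonnegative, and because $\Delta f\leq0$, the term $-\frac{1}{f^2}\Delta f$ is also nonnegative. Adding the two yields $\Delta\bigl(\frac{1}{f}\bigr)\geq0$, which is exactly the asserted subharmonicity. Equivalently, multiplying through by $f^3>0$ reduces the goal to $2|\nabla f|^2-f\Delta f\geq0$, and the sharper bound $f\Delta f\leq-\frac{1}{2}|\nabla f|^2\leq2|\nabla f|^2$ secures this in one line.

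There is essentially no genuine obstacle here: the entire argument rests on the single product-rule identity for $\Delta f^2$ together with the sign of $f$. The only points requiring care are the consistency of the computation with the formula (\ref{eq9}) derived earlier in the proof of Theorem \ref{th2} and the correct expansion of $\Delta f^2$; once both formulas are in hand, the positivity of $\Delta\bigl(\frac{1}{f}\bigr)$ is immediate.
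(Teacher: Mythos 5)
Your proof is correct and takes essentially the same route as the paper's: both expand $\Delta (f^2)=2f\Delta f+2|\nabla f|^2$ via the product rule and feed the resulting sign information into the formula (\ref{eq9}) for $\Delta\big(\frac{1}{f}\big)$. The only cosmetic difference is that you extract the sharper intermediate fact $\Delta f\leq 0$ (so each term in (\ref{eq9}) is separately nonnegative), whereas the paper derives only the weaker combination $2|\nabla f|^2-f\Delta f\geq 0$, which is all that is needed.
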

\begin{proof}
Given that $\Delta f^2\leq |\nabla f|^2.$
Then
\begin{eqnarray*}
0&\geq & \Delta f^2-6|\nabla f|^2\\
&=& (\Delta f^2-2|\nabla f|^2)-4|\nabla f|^2\\
&=& f\Delta f-2|\nabla f|^2.
\end{eqnarray*}
Therefore
$$ 2|\nabla f|^2-f\Delta f\geq 0.$$ 
Since $f>0$, the above inequality implies that
$$\frac{2}{f^3}|\nabla f|^2-\frac{1}{f^2}\Delta f\geq 0.$$
The right side of the above inequality is $\Delta(\frac{1}{f})$, which is non-negative. Therefore, $\frac{1}{f}$ is subharmonic.
\end{proof}
\begin{thm}\cite{KA82}\label{th1}
Let $f\in C^\infty(M)$ be a nonnegative subharmonic function. Then either $f$ is constant or 
$$\liminf_{r\rightarrow\infty}\frac{1}{r^2}\int_{B_r(p)}f^pdV=+\infty,$$
for every $p\in M$ and $p>1.$
\end{thm}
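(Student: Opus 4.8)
The plan is to prove the contrapositive: assuming that $\liminf_{r\to\infty} r^{-2}\int_{B_r(o)} f^p\,dV$ is finite for some base point $o$ and some exponent $p>1$, I will show that $f$ must be constant. Throughout I write $v=f^{p/2}\ge 0$ and $V(r)=\int_{B_r(o)} v^2\,dV=\int_{B_r(o)} f^p\,dV$, and I denote the finite liminf by $L$. The engine of the argument is a reverse Poincar\'e (Caccioppoli) inequality extracted from subharmonicity. Using a cut-off $\varphi$ of the same type already employed in the proof of Theorem \ref{th2} and testing the subharmonicity of $f$ against the nonnegative weight $\varphi^2 f^{p-1}$ (here $p>1$ is exactly what makes $f^{p-1}$ admissible and produces the correct sign), integration by parts together with Young's inequality yields
\[
\int_M \varphi^2\,|\nabla v|^2\,dV \;\le\; \frac{p^2}{(p-1)^2}\int_M |\nabla \varphi|^2\, v^2\,dV .
\]

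From this one inequality I will draw two conclusions. First, choosing $\varphi$ equal to $1$ on $B_s(o)$, vanishing outside $B_t(o)$ with $|\nabla\varphi|\le (t-s)^{-1}$, and letting $t=r_k\to\infty$ run along the sequence that realizes the liminf while keeping $s$ fixed, the right-hand side tends to $\frac{p^2}{(p-1)^2}L$; hence $\int_{B_s(o)}|\nabla v|^2\le \frac{p^2}{(p-1)^2}L$ for every $s$, so the total Dirichlet energy $E:=\int_M|\nabla v|^2\,dV$ is finite. Second, stopping before Young's inequality is applied, with an annular cut-off that is $1$ on $B_r(o)$ and vanishes outside $B_{2r}(o)$, a Cauchy--Schwarz estimate that retains the gradient factor gives
\[
\frac{p-1}{p}\int_{B_r(o)}|\nabla v|^2\,dV \;\le\; \frac{1}{r}\,V(2r)^{1/2}\Big(\int_{M\setminus B_r(o)}|\nabla v|^2\,dV\Big)^{1/2}.
\]

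These two conclusions will combine to finish the argument. If $f$ were non-constant then $v$ would be non-constant, so $\int_{B_r(o)}|\nabla v|^2\,dV\ge \delta$ for some $\delta>0$ and all large $r$; meanwhile the finiteness of $E$ forces the tail energy $\varepsilon(r):=\int_{M\setminus B_r(o)}|\nabla v|^2\,dV$ to tend to $0$. Substituting into the second displayed inequality gives $V(2r)\ge c\,\delta^2 r^2/\varepsilon(r)$, whence $V(s)/s^2\to\infty$ as $s\to\infty$, contradicting $L<\infty$; therefore $E=0$, $v$ is constant, and so is $f$. The delicate point, and the reason the theorem is more than a one-line Caccioppoli estimate, is that the reverse Poincar\'e inequality is scale-critical (purely quadratic), so a finite value of $L$ does not by itself force any decay. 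The resolution is precisely the two-step mechanism above: one first upgrades the growth bound into finite global energy, and then exploits that the \emph{tail} of a finite energy necessarily vanishes. The only remaining technical care is to justify the cut-off integrations by parts on a complete manifold, approximating the weight $f^{p-1}$ near the zero set of $f$ in the range $1<p<2$.
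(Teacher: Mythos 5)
The paper contains no proof of this statement --- it is quoted directly from Karp \cite{KA82} and used as a black box in the proof of Theorem \ref{th3} --- so there is no internal argument to compare against; measured against the cited source, your proof is correct and is essentially Karp's own route: the reverse Poincar\'e (Caccioppoli) inequality for $v=f^{p/2}$ obtained by testing $\Delta f\geq 0$ against $\varphi^2 f^{p-1}$, the upgrade to finite global Dirichlet energy by sending the outer radius along the sequence realizing the liminf, and the annulus estimate showing that a nonvanishing energy lower bound together with a decaying tail forces $r^{-2}\int_{B_{2r}}f^p\rightarrow\infty$. Two small points to tidy: in the final step, if $\varepsilon(r)=0$ for some $r$ the division is vacuous, since the same inequality already yields $\int_{B_r}|\nabla v|^2=0$ and hence $v$ constant; and the regularity issue you flag at the zero set of $f$ when $1<p<2$ is handled in the standard way by running the computation with $f+\epsilon$ (still smooth, subharmonic, and strictly positive, so $(f+\epsilon)^{p-1}$ is an admissible weight) and letting $\epsilon\rightarrow 0$ via Fatou, which also shows $v\in W^{1,2}_{loc}$ so that all displayed quantities make sense. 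With those routine remarks your argument is complete and matches the mechanism of the cited theorem.
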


\begin{proof}[\textbf{Proof of Theorem \ref{th3}}]

Since $M$ is non-flat shrinking Ricci soliton, the scalar curvature is positive \cite{CH09,ZH09}. Thus (\ref{eq2}) implies that $f>0$. Then from (\ref{eq13}), we have
\begin{equation}
\frac{nc}{f^2}\leq R.
\end{equation}

Furthermore, Lemma \ref{lm1} implies $\frac{1}{f}$ is subharmonic. Again
$$\Delta\Big(\frac{1}{f^2}\Big)=2\frac{1}{f}\Delta\frac{1}{f}+2|\nabla \frac{1}{f}|^2.$$
Hence, $\frac{1}{f^2}$ is also subharmonic.
Now, the scalar curvature estimation in \cite{CZ10} implies that
\begin{equation}
\int_{B_r(p)}\frac{nc}{f^2}dV\leq \int_{B_r(p)} RdV\leq \frac{n}{2}Vol(B(p,r)).
\end{equation} 
Since $M$ has quadratic volume growth, i.e., $Vol(B(p,r))\leq C_1r^2$, where $C_1$ is a positive constant, the above inequality implies that
$$c\int_{B_r(p)}\frac{1}{f^2}dV\leq \frac{C_1}{2}r^2,$$
i.e.,
\begin{equation}
\frac{1}{r^2}\int_{B_r(p)}\frac{1}{f^2}dV\leq C',
\end{equation}
where $C'=C_1/2c$ is a positive constant.
This implies that
$$\liminf_{r\rightarrow\infty} \frac{1}{r^2}\int_{B_r(p)}\frac{1}{f^2}dV<+\infty,$$ and it contradicts Theorem \ref{th1}. Thus we conclude that $f$ must be constant. Hence (\ref{eq10}) implies $Ric= \frac{1}{2}g$. Therefore Myers's Theorem \cite{AM57} implies that the manifold $M$ is compact.
\end{proof}

\end{document}